\newtheorem*{theorem*}{Theorem}
\newtheorem*{lemma*}{Lemma}
\newtheorem*{fact*}{Fact} 
\newcommand{\zero}{{\mathbf 0}}
\theoremstyle{definition} 
\newtheorem*{remark*}{Remark}
\newtheorem{definition}{Definition}
\newtheorem*{definition*}{Definition}
\newtheorem*{example*}{Example}
\def\btk#1#2{\bigl[\bigotimes^{#1}{#2}\bigr]}
\begin{document}

\title[Extremally Disconnected Groups of Measurable Cardinality]{Extremally Disconnected Groups\\ of Measurable Cardinality} 
\author{Ol'ga Sipacheva}

\begin{abstract}
Given an arbitrary measurable cardinal $\kappa$, a nondiscrete 
Hausdorff extremally disconnected group of cardinality $\kappa$ is constructed.
\end{abstract}

\keywords{Topological group, extremally disconnected, measurable cardinal}

\subjclass[2020]{54H11, 54G05, 03E35}

\address{Department of General Topology and Geometry, Faculty of Mechanics and  Mathematics, 
M.~V.~Lomonosov Moscow State University, Leninskie Gory 1, Moscow, 199991 Russia}

\email{o-sipa@yandex.ru, osipa@gmail.com}

\maketitle

This note is concerned with Arhangel'skii's old problem of the existence in ZFC of a nondiscrete 
Hausdorff extremally disconnected group~\cite{Arhangelskii}. The nonexistence of a countable group 
with these properties in consistent with ZFC~\cite{RS}. Here we show that, for any measurable 
cardinal $\kappa$, there exists a nondiscrete Hausdorff extremally disconnected group of cardinality 
$\kappa$. We begin with basic definitions; for more details on measurable cardinals and related 
ultrafilters, see, e.g., \cite{Jech}. 

\begin{definition}
A topological space is \emph{extremally disconnected} if the closure of any open set is open 
in this space.
\end{definition}

\begin{definition}
Let $\kappa$ be an uncountable cardinal. A filter $\mathscr F$ is \emph{$\kappa$-complete} if 
$\bigcap_{\alpha<\lambda} A_\alpha\in \mathscr F$ for any cardinal $\lambda<\kappa$ and any 
$A_\alpha\in \mathscr F$, $\alpha <\lambda$. 
\end{definition}

\begin{definition}
An uncountable cardinal $\kappa$ is said to be \emph{measurable} if there exists a $\kappa$-complete 
nonprincipal ultrafilter on $\kappa$. 
\end{definition}

The nonexistence of measurable cardinals is consistent with ZFC, while the consistency of their 
existence cannot be proved within ZFC (see \cite[Lemma~10.4 and Theorem~12.12]{Jech}).

\begin{definition}
Let $\delta$ be an ordinal, and let $X_\alpha\subset \delta$ for $\alpha <\delta$. The  
\emph{diagonal intersection} $\Delta_{\alpha <\delta}$ of the sequence 
$(X_\alpha)_{\alpha<\delta}$ is $\{\alpha<\delta: \alpha\in \bigcap_{\beta 
<\alpha}X_\alpha$.
\end{definition}

\begin{definition}
A filter $\mathscr F$ on a cardinal $\kappa$ is \emph{normal} if it is closed under diagonal
intersections.
\end{definition}

Obviously, a  nonprincipal normal ultrafilter on $\kappa$ is $\kappa$-complete if and 
only if it is \emph{uniform} (that is, contains no elements of cardinality less than $\kappa$). 

\begin{fact*}[{see \cite[Theorem 10.20]{Jech}}]
\label{normal}
Every measurable cardinal $\kappa$ carries a normal $\kappa$-complete nonprincipal ultrafilter.
\end{fact*}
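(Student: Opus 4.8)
The plan is to start from the $\kappa$-complete nonprincipal ultrafilter $\mathscr U$ on $\kappa$ guaranteed by measurability and to manufacture a normal one by pushing $\mathscr U$ forward along a carefully chosen function. The key tool is the relation $f <_{\mathscr U} g$ on functions $\kappa\to\kappa$, defined by $\{\alpha<\kappa: f(\alpha)<g(\alpha)\}\in\mathscr U$. I would first observe that this relation is well-founded: an infinite descending chain $f_0>_{\mathscr U}f_1>_{\mathscr U}\cdots$ would, by the countable completeness of $\mathscr U$ (immediate from $\kappa$-completeness since $\kappa>\omega$), yield a single $\alpha$ lying in all the sets $\{\alpha:f_{n+1}(\alpha)<f_n(\alpha)\}$ simultaneously, producing an infinite descending chain of ordinals $f_0(\alpha)>f_1(\alpha)>\cdots$, which is absurd.

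Next, I would single out the class of functions that are \emph{not} constant modulo $\mathscr U$ (that is, $\{\alpha:g(\alpha)=\gamma\}\notin\mathscr U$ for every $\gamma$); this class is nonempty because $\mathscr U$ is nonprincipal, so the identity already belongs to it. By well-foundedness it contains a $<_{\mathscr U}$-minimal element $f$, and I would set $\mathscr D:=f_*\mathscr U=\{X\subseteq\kappa: f^{-1}(X)\in\mathscr U\}$. A routine check shows $\mathscr D$ is a $\kappa$-complete ultrafilter (completeness is preserved under pushforward), and it is nonprincipal precisely because $f$ is not constant modulo $\mathscr U$; a nonprincipal $\kappa$-complete ultrafilter on $\kappa$ is automatically uniform.

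The heart of the argument is verifying that $\mathscr D$ is normal, and here I would use the regressive-function characterization: an ultrafilter is closed under diagonal intersections if and only if every function regressive on a set in the ultrafilter is constant on a set in the ultrafilter. The relevant direction is the standard Fodor-style argument: if $\Delta_{\alpha<\kappa}X_\alpha\notin\mathscr D$ with each $X_\alpha\in\mathscr D$, then the map sending $\beta$ to the least $\alpha<\beta$ with $\beta\notin X_\alpha$ is regressive on a set in $\mathscr D$, and its being constant with value $\alpha_0$ forces $\kappa\setminus X_{\alpha_0}\in\mathscr D$, a contradiction. So let $h$ be regressive on some $A\in\mathscr D$. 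Composing with $f$, on the set $f^{-1}(A)\in\mathscr U$ we have $h(f(\alpha))<f(\alpha)$, whence $h\circ f<_{\mathscr U}f$. By minimality of $f$, the function $h\circ f$ cannot be non-constant modulo $\mathscr U$; hence there is $\gamma$ with $\{\alpha:h(f(\alpha))=\gamma\}\in\mathscr U$, i.e.\ $f^{-1}(h^{-1}(\gamma))\in\mathscr U$, i.e.\ $h^{-1}(\gamma)\in\mathscr D$. Thus $h$ is constant on a set in $\mathscr D$, exactly as required.

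The main obstacle, and the only genuinely delicate point, is the choice of the minimal function $f$ together with the recognition that its minimality is precisely the leverage that turns an arbitrary regressive function downstairs into a constant one upstairs; everything else (well-foundedness, preservation of completeness, and the equivalence between the diagonal-intersection and regressive-function formulations of normality) is bookkeeping. I would also take minor care with the value $0$ when extending a partially defined regressive $h$ to all of $\kappa$, but uniformity of $\mathscr D$ makes this harmless.
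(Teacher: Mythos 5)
Your proof is correct and complete: the well-foundedness of $<_{\mathscr U}$ via $\sigma$-completeness, the choice of a $<_{\mathscr U}$-minimal function not constant modulo $\mathscr U$, the pushforward $f_*\mathscr U$, and the regressive-function characterization of normality together constitute the standard argument, and the side issues you flag (the value $0$, uniformity of the pushforward) are handled correctly. The paper itself gives no proof of this Fact but simply cites Theorem~10.20 of Jech, and your argument is essentially the proof given there, so there is nothing to compare beyond noting that you have reconstructed the cited proof faithfully.
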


Given a set $X$ and a cardinal $\kappa$, we use the standard notation
$$
[X]^\kappa=\{Y\subset X: |Y|=\kappa\}, \qquad  [X]^{<\kappa}=\{Y\subset X: |Y|<\kappa\}.
$$

For any cardinal $\kappa$, the set $[\kappa]^{<\omega}$ 
of all finite subsets of $\kappa$ is the Boolean group freely generated by $\kappa$ with 
respect to the operation $\triangle$ of symmetric difference. For any filter $\mathscr F$ on 
$\kappa$, the subgroups $\langle A\rangle$ generated by $A\in \mathscr F$ form a base of 
neighborhoods of zero in a group topology on $[\kappa]^{<\omega}$. We denote the group 
$[\kappa]^{<\omega}$ with this topology by $B(\kappa_\mathscr F)$ and the zero element of this group 
by $\zero$. Obviously, if the filter $\mathscr F$ is free, then $B(\kappa_\mathscr F)$ is a 
nondiscrete Hausdorff group. 

Our purpose is to prove the following assertion. 

\begin{theorem*}
Given any normal $\kappa$-complete nonprincipal ultrafilter $\mathscr U$ on a cardinal $\kappa$, 
$B(\kappa_\mathscr U)$ is a nondiscrete Hausdorff extremally disconnected topological group. 
Moreover, all subsets of cardinality less than $\kappa$ in $B(\kappa_\mathscr U)$ are closed and 
discrete, and $B(\kappa_\mathscr U)$ is a $P_\kappa$-space, i.e., the intersection of any family of 
fewer than $\kappa$ open sets in $B(\kappa_\mathscr U)$ is open.
\end{theorem*}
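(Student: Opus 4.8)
The plan is to begin by making the neighborhood structure fully explicit, since everything hinges on a concrete description of closures. Because $\langle A\rangle=[A]^{<\omega}$ for $A\in\mathscr U$, the basic neighborhood of a point $x$ is $x\triangle[A]^{<\omega}=\{z\in[\kappa]^{<\omega}:z\setminus A=x\setminus A\}$; these are open subgroups (up to translation), hence clopen, so $B(\kappa_{\mathscr U})$ is zero-dimensional. The Hausdorff and nondiscreteness claims follow at once from the fact that a nonprincipal uniform ultrafilter is free, i.e. $\bigcap\mathscr U=\emptyset$, so that $\bigcap_{A\in\mathscr U}[A]^{<\omega}=\{\zero\}$. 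The computation I will rely on throughout is the closure formula $x\in\overline M\iff \forall A\in\mathscr U\ \exists m\in M\ (x\triangle m\subseteq A)$, which I will restate in ``trace'' form: calling a set $C\subseteq\kappa$ \emph{small} when $C\notin\mathscr U$ (equivalently $\kappa\setminus C\in\mathscr U$), one has $x\in\overline M$ iff for every small $C$ the trace $x\cap C$ equals $m\cap C$ for some $m\in M$.

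For the $P_\kappa$-property and the closed-discreteness of small sets I would use only $\kappa$-completeness and uniformity. Since $\bigcap_{\alpha<\lambda}[A_\alpha]^{<\omega}=\bigl[\bigcap_{\alpha<\lambda}A_\alpha\bigr]^{<\omega}$ with $\bigcap_{\alpha<\lambda}A_\alpha\in\mathscr U$ for $\lambda<\kappa$, the basic neighborhoods at each point are closed under intersections of size less than $\kappa$, which immediately gives the $P_\kappa$-property. If $|S|<\kappa$, then $T=\bigcup S$ is a union of fewer than $\kappa$ finite sets, so $|T|<\kappa$ and $A=\kappa\setminus T\in\mathscr U$; for distinct $x,y\in S$ the difference $x\triangle y\subseteq T$ is disjoint from $A$, so $x\triangle[A]^{<\omega}$ meets $S$ only in $x$, proving discreteness, and applying the same to $S\cup\{z\}$ for $z\notin S$ gives a neighborhood of $z$ missing $S$, proving closedness.

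The substance of the theorem is extremal disconnectedness, and here is the route I would take. By homogeneity it suffices to prove: if $U$ is open and $\zero\in\overline U$, then there is $A_0\in\mathscr U$ with $[A_0]^{<\omega}\subseteq\overline U$; this puts $\zero$ in the interior of $\overline U$, and translating shows $\overline U$ is open. I will first record the \emph{cone structure}: each $u\in U$ has some $A_u\in\mathscr U$ with $u\cap A_u=\emptyset$ and $\{u\cup H:H\in[A_u]^{<\omega}\}\subseteq U$; intersecting finitely many sets of $\mathscr U$ then lets me find, inside any prescribed member of $\mathscr U$, points of $U$ of any prescribed finite size. Using the trace form of the closure, the target reduces to showing, for each $n<\omega$, that the family $\{q\in[\kappa]^n:q\in\overline U\}$ contains $[H_n]^n$ for some $H_n\in\mathscr U$; then $A_0=\bigcap_{n<\omega}H_n\in\mathscr U$ (by $\kappa$-completeness, as $\omega<\kappa$) satisfies $[A_0]^{<\omega}\subseteq\overline U$, since any finite $z\subseteq A_0$ of size $n$ lies in $[H_n]^n$.

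The per-$n$ statement is where normality enters decisively, and it is the step I expect to be the main obstacle. I would prove it with the partition property of the normal measure: take $H_n\in\mathscr U$ homogeneous for the two-coloring ``is $q\in\overline U$?'', and rule out the constant-$0$ color. If every $q\in[H_n]^n$ failed, choose for each a small $C_q\supseteq q$ admitting no $w\in U$ with $w\cap C_q=q$, form the diagonal intersection $D=\Delta_{\delta<\kappa}\bigl(\kappa\setminus\bigcup\{C_q:\max q\le\delta\}\bigr)\in\mathscr U$ (normality, the inner unions being small by $\kappa$-completeness), and pick $u\in U$ with $u\subseteq H_n\cap D$ and $|u|\ge n$ via the cone structure. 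Letting $q^*$ be the $n$ least elements of $u$, the diagonal property forces every larger element of $u$ out of $C_{q^*}$, whence $u\cap C_{q^*}=q^*$, contradicting the choice of $C_{q^*}$; so the color is $1$ and $[H_n]^n\subseteq\overline U$. The crux is precisely this argument for $n\ge2$: the case $n=1$ is a clean direct diagonal-intersection/``least element'' argument, but for larger arity one must combine the partition property (to pass to a homogeneous set on which \emph{every} $n$-element trace is unrealizable) with the diagonal intersection and the cone structure, and both the homogeneity and the diagonal intersection depend essentially on the normality of $\mathscr U$, which is exactly the hypothesis that distinguishes this construction from one using mere $\kappa$-completeness.
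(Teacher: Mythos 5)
Your proof is correct, but the engine driving the key step is genuinely different from the paper's. The routine parts (the closure formula, Hausdorffness and nondiscreteness from freeness of $\mathscr U$, the $P_\kappa$-property and the closed discreteness of small sets from $\kappa$-completeness and uniformity, and the reduction by translation to producing $A_0\in\mathscr U$ with $\langle A_0\rangle\subset\overline U$) coincide with what the paper does. For extremal disconnectedness the paper proves, as a self-contained Lemma, that the sets $[A]^k$ with $A\in\mathscr U$ form a base of the symmetric product ultrafilter $\btk k{\mathscr U}$; it then uses $\sigma$-completeness to locate a single $k$ with $\zero\in\overline{U\cap[\kappa]^k}$, extracts $B\in\mathscr U$ with $[B]^k\subset U$, and a single diagonal intersection over the cone neighborhoods $F+\langle A_F\rangle$ yields $A$ such that every element of $\langle A\rangle$ of size at least $k$ lies in $U$ itself and the smaller ones lie in $\overline U$ by extension. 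You instead invoke Rowbottom's partition theorem for normal measures --- which is equivalent to the paper's Lemma, since that Lemma together with the fact that $\btk n{\mathscr U}$ is an ultrafilter says exactly that every $2$-colouring of $[\kappa]^n$ has a homogeneous set in $\mathscr U$ --- and prove, for each arity $n$ separately, that $[H_n]^n\subset\overline U$ by ruling out the bad colour: the diagonal intersection $D$ of the complements of the unions of the witnessing small sets $C_q$, combined with the cone structure of $U$, produces $u\in U$ with $u\subset H_n\cap D$, $|u|\ge n$, and $u\cap C_{q^*}=q^*$, the desired contradiction. I checked this argument and it is sound, including the harmless enlargement to $C_q\supseteq q$, the smallness of $\bigcup\{C_q:\max q\le\delta\}$ by $\kappa$-completeness, and the final intersection $A_0=\bigcap_n H_n$. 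What the paper's route buys is self-containedness: only the existence of a normal measure is cited, and the combinatorial lemma is proved from scratch. What your route buys is an architecture that never needs to find elements of $U$ of one specific size $k$ and treats every arity uniformly through $\overline U$, at the cost of repeating the diagonal-intersection work for each $n$ and of outsourcing to Rowbottom's theorem precisely the combinatorial content of the paper's Lemma; a self-contained writeup would have to prove that partition property, and its standard proof is essentially the induction the paper carries out.
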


The proof of this theorem is based on a lemma about symmetric products of ultrafilters, which 
are defined by analogy with the usual (Fubini, or tensor) products of ultrafilters  as follows. 

\begin{definition}
Let $\mathscr F$ and $\mathscr G$ be two filters on sets $X$ and $Y$, 
respectively. The family 
$$
\mathscr F\otimes \mathscr G=
\{A\subset X\times X: \{x\in X:\{y\in Y: (x,y)\in A\}\in \mathscr G\}\in \mathscr F\} 
$$
is called the \emph{product} of $\mathscr F$ and $\mathscr G$.\footnote{In the literature, 
the terms \emph{tensor product} and  
\emph{Fubini product}  and notations $\mathscr F\cdot \mathscr G$ and $\mathscr F\times 
\mathscr G$ are also used.} 
\end{definition}

\begin{definition}
Let $k$ be a positive integer, and let $\mathscr F_1$, \dots, $\mathscr F_k$ be filters on 
 a cardinal $\kappa$. We define the \emph{symmetric product} $[\mathscr F_1\otimes \dots\otimes 
\mathscr F_k$ of $\mathscr F_1]$, \dots, $\mathscr F_k$ recursively. 
The symmetric product of a single factor is 
set equal to this factor, and for $k>1$, 
\begin{multline*}
[\mathscr F_1\otimes \dots\otimes 
\mathscr F_k]\\
= \{A\subset [\kappa]^k: 
\{F\in [\kappa]^{k-1}:\{\alpha\in \kappa\setminus (\max F+1): F\cup \{\alpha\}\in 
A\}\in \mathscr F_n\}\\
\in [\mathscr F_1\otimes \dots\otimes 
\mathscr F_{k-1}]\}.
\end{multline*}
In particular, for two filters $\mathscr F$ and $\mathscr G$ on $\kappa$, 
$$
[\mathscr F\otimes \mathscr G]=
\{A\subset [\kappa]^2: 
\{\alpha\in \kappa:\{\beta \in \kappa\setminus \alpha: \{\alpha, \beta\}\in 
A\}\in \mathscr G\}\in \mathscr F\}.
$$ 

For the symmetric product of $k$ copies of a filter $\mathscr 
F$, we use the notation $\btk k{\mathscr F}$.  
\end{definition}

It is well known that the product of any two filters (ultrafilters) is 
a filter (ultrafilter); see, e.g., \cite[p.~156]{Comfort-Negrepontis}. It easily follows by 
induction that the symmetric product of any $k$ uniform ultrafilters on an infinite cardinal 
$\kappa$ is an ultrafilter (it suffices to note that the diagonal 
$\Delta=\{(\alpha,\dots,\alpha):\alpha\in \kappa\}$ of $\kappa^k$ 
is not in $[\mathscr F_1\otimes\dots\otimes \mathscr F_k]$ and none of the initial intervals of 
$\kappa$ is not in $\mathscr F_k$ and consider the direct image \cite[p.~155]{Comfort-Negrepontis} 
of the ultrafilter $[\mathscr F_1\otimes\dots\otimes \mathscr F_{k-1}]\otimes \mathscr 
F_k$ restricted to 
$$
X=\{(F, \alpha): F\in [\kappa]^{k-1}, \ \alpha\in \kappa \setminus (\max F+1)\}\subset  
[\kappa]^{k-1}\times \kappa
$$ 
under the map 
$f\colon X \to [\kappa]^k$ defined by $f((F,\alpha))=F\cup\{\alpha\}$).  
Note also that $[\mathscr F\otimes \mathscr G]$ is nonprincipal if so is $\mathscr F$ or 
$\mathscr G$. 

\begin{lemma*}
For any positive integer $k$, any cardinal $\kappa$, and any normal $\kappa$-complete ultrafilter 
$\mathscr U$ on $\kappa$, the sets $[A]^k$, $A\in \mathscr U$, form a base of the ultrafilter 
$\btk k{\mathscr U}$. 
\end{lemma*}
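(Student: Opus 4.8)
The plan is to prove both halves of the assertion --- namely that every $[A]^k$ with $A\in\mathscr U$ lies in $\btk k{\mathscr U}$, and that every member of $\btk k{\mathscr U}$ contains some $[A]^k$ with $A\in\mathscr U$ --- simultaneously by induction on $k$. The case $k=1$ is immediate, since $\btk 1{\mathscr U}=\mathscr U$ under the identification $\alpha\leftrightarrow\{\alpha\}$ of $\kappa$ with $[\kappa]^1$, which sends $A$ to $[A]^1$. Before the inductive step I would record that a nonprincipal normal $\kappa$-complete ultrafilter is uniform, so that $A\setminus\beta\in\mathscr U$ for every $A\in\mathscr U$ and every $\beta<\kappa$; this is the only way $\kappa$-completeness enters the easy direction.

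First I would verify that each $[A]^k$ with $A\in\mathscr U$ belongs to $\btk k{\mathscr U}$. Unwinding the recursive definition, this amounts to $\{F\in[\kappa]^{k-1}:\Phi(F)\in\mathscr U\}\in\btk{k-1}{\mathscr U}$, where $\Phi(F)=\{\alpha>\max F:F\cup\{\alpha\}\in[A]^k\}$ (note $\alpha>\max F$ already forces $\alpha\notin F$, so $F\cup\{\alpha\}$ has size $k$ automatically). A direct computation gives $\Phi(F)=A\setminus(\max F+1)$ when $F\subset A$ and $\Phi(F)=\varnothing$ otherwise, so, using uniformity to see $A\setminus(\max F+1)\in\mathscr U$, the set $\{F:\Phi(F)\in\mathscr U\}$ is exactly $[A]^{k-1}$. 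The induction hypothesis then yields $[A]^{k-1}\in\btk{k-1}{\mathscr U}$, hence $[A]^k\in\btk k{\mathscr U}$.

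The substantial direction is the converse, and this is where I expect the main obstacle to lie. Given $B\in\btk k{\mathscr U}$, the recursive definition produces $C:=\{F\in[\kappa]^{k-1}:A_F\in\mathscr U\}\in\btk{k-1}{\mathscr U}$, where $A_F=\{\alpha>\max F:F\cup\{\alpha\}\in B\}$, and the induction hypothesis furnishes $A_0\in\mathscr U$ with $[A_0]^{k-1}\subset C$, so that $A_F\in\mathscr U$ for every $F\in[A_0]^{k-1}$. The difficulty is to collapse these potentially $\kappa$ many sets $A_F$ into a single $A\in\mathscr U$ with $[A]^k\subset B$, i.e.\ with $\gamma_k\in A_{\{\gamma_1,\dots,\gamma_{k-1}\}}$ whenever $\gamma_1<\dots<\gamma_k$ all lie in $A$; this is precisely the point at which both distinguishing properties of $\mathscr U$ must be used together. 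The plan is to put $X_F=A_F$ for $F\in[A_0]^{k-1}$ and $X_F=\kappa$ otherwise, then invoke $\kappa$-completeness to form $Y_\gamma=\bigcap\{X_F:F\in[\gamma+1]^{k-1}\}\in\mathscr U$ (a legitimate intersection, since $|[\gamma+1]^{k-1}|<\kappa$ for the finite exponent $k-1$), and finally invoke normality to pass to the diagonal intersection $D=\Delta_{\gamma<\kappa}Y_\gamma\in\mathscr U$. Setting $A=A_0\cap D$, for any $\{\gamma_1<\dots<\gamma_k\}\in[A]^k$ one applies $\gamma_k\in D$ at the index $\gamma=\gamma_{k-1}<\gamma_k$ to get $\gamma_k\in Y_{\gamma_{k-1}}\subset X_{\{\gamma_1,\dots,\gamma_{k-1}\}}=A_{\{\gamma_1,\dots,\gamma_{k-1}\}}$, whence $\{\gamma_1,\dots,\gamma_k\}\in B$ and therefore $[A]^k\subset B$. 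The one delicate point I would watch is to index each $Y_\gamma$ by the sets $F$ with $\max F\le\gamma$ (that is, $F\subset\gamma+1$) rather than $\max F<\gamma$: this is exactly what covers the case of consecutive top coordinates $\gamma_k=\gamma_{k-1}+1$, where no ordinal strictly between $\max F$ and $\gamma_k$ is available.
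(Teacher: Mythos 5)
Your proof is correct, but the key step is handled by a genuinely different device than the one in the paper. Both arguments reduce, via the induction hypothesis, to the same combinatorial core: given sets $A_F\in\mathscr U$ indexed by $F\in[A_0]^{k-1}$, produce a single $A\in\mathscr U$ such that $\gamma_k\in A_{\{\gamma_1,\dots,\gamma_{k-1}\}}$ whenever $\gamma_1<\dots<\gamma_k$ lie in $A$. You resolve this by the standard ``Rowbottom'' trick: first use $\kappa$-completeness to amalgamate $Y_\gamma=\bigcap\{A_F:F\in[\gamma+1]^{k-1}\}$ (legitimate since $|[\gamma+1]^{k-1}|<\kappa$), then apply one ordinary diagonal intersection to the $\kappa$-indexed family $(Y_\gamma)$; your care about indexing over $\max F\le\gamma$ rather than $\max F<\gamma$ is exactly right. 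The paper instead re-indexes $[\kappa]^{k-1}$ by a single ordinal parameter via the colexicographic order, takes the diagonal intersection of the family $A_{F_\alpha}\setminus(\max F_\alpha+1)$ directly, and must introduce an auxiliary thinning set $D=\Delta_\alpha(\kappa\setminus(\gamma_{k-1}(\alpha)+1))$ to guarantee that the colexicographic index of $\{\gamma_1,\dots,\gamma_{k-1}\}$ falls below $\gamma_k$, so that the diagonal intersection bites at the right place. The two proofs use the same two hypotheses (completeness and normality), but your division of labor avoids the order-type bookkeeping entirely and is the cleaner of the two. One further difference: you also verify explicitly that each $[A]^k$ belongs to $\btk k{\mathscr U}$, which the paper omits; this direction is in fact automatic from the hard direction once one knows $\btk k{\mathscr U}$ is an ultrafilter (two sets of the form $[A]^k$, $[A']^k$ with $A,A'\in\mathscr U$ can never be separated, since $[A\cap A']^k\ne\varnothing$), but including it does no harm and makes the induction self-contained. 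Note, as you implicitly did, that nonprincipality must be assumed for the statement to hold when $k\ge2$, even though the lemma as printed omits it.
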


\begin{proof}
We prove the lemma by induction on $k$. For $k=1$, there is nothing to prove. Suppose that $k>1$ and 
the assertion holds for all smaller $k$. 

We identify each $F\in [\kappa]^{k-1}$ with the increasing $(k-1)$-tuple of the elements of $F$. 
Being endowed with the corresponding colexicographic order 
$\preccurlyeq$, $[\kappa]^{k-1}$ is a well-ordered set of order type $\kappa$ (because each  $F\in 
[\kappa]^{k-1}$ has less than $\kappa$ predecessors). 
For each $\alpha<\kappa$ and every positive 
integer $m$, let $\gamma_m(\alpha)$ denote the order type of $([\alpha+1]^m,\preccurlyeq)$. Then 
$\kappa\setminus (\gamma_{k-1}(\alpha)+1)\in \mathscr U$ 
(because $\mathscr U$ is $\kappa$-complete and, therefore, uniform), whence 
$$ 
D=\Delta_{\alpha\in 
\kappa}(\kappa\setminus (\gamma_{k-1}(\alpha)+1)\in \mathscr U \quad\text{and}\quad \alpha > \gamma_{k-1}(\beta) \text{ for 
any }\alpha, \beta \in D,\ \alpha>\beta. \eqno(\star) 
$$

Take any $\widetilde A\in \btk k {\mathscr U}$. By the induction hypothesis (and by the definition 
of the symmetric product of filters), there exists an $A'\in \mathscr U$ and a family $\{A_F: F\in 
[A']^{k-1}\}$, where $A_F\in \mathscr U$, such that $A'\subset D$ and  $\widetilde D=\{F\cup 
\{\alpha\}: F\in [A']^{k-1}, \alpha\in A_F\setminus (\max F+1)\}\subset \widetilde A$. Let us number 
the elements of $[A']^{k-1}$ by ordinals in $\preccurlyeq$-increasing order: 
$$
[A']^{k-1}=\{F_\alpha:\alpha<\kappa\},\qquad F_\alpha\prec F_\beta\quad\text{for}\quad 
\alpha<\beta. 
$$
Clearly, the ordinal number $\alpha$ of any $F_\alpha \in ([A']^{k-1},\preccurlyeq)$ does not 
exceed the ordinal number of $F_\alpha$ in $([\kappa]^k,\preccurlyeq)$. We set 
$A_\alpha=A_{F_\alpha}\setminus (\max F_\alpha + 1)$ and $A=\Delta_{\alpha\in \kappa} A_\alpha\cap 
A'$. If $\alpha_1<\dots<\alpha_k$, $\alpha_i\in A$, then $\{\alpha_1, \dots,\alpha_{k-1}\}=F_\alpha$ 
for some $\alpha\in \kappa$. Since $\alpha_k\in A\subset D$, we have $\alpha_k>\gamma_{k-1} 
(\alpha_{k-1})$. Therefore, the ordinal number of the set $F_\alpha$ in $([\kappa]^{k-1}, 
\preccurlyeq)$ is less than $\alpha_k$ (because $F_\alpha\in [\alpha_{k-1}+1]^{k-1}$), and hence 
$\alpha<\alpha_k$. Thus, it follows from $\alpha_k\in \Delta_{\beta\in \kappa}A_\beta$ that 
$\alpha_k\in A_\alpha\subset A_{F_\alpha}$. By the definition of $A_{F_\alpha}$, we have 
$\{\alpha_1, \dots, \alpha)_k\}\in \widetilde A$, and the arbitrariness of 
$\alpha_1,\dots,\alpha_k\in A$ implies $[A]^k\subset \widetilde A$. 
\end{proof}

Now we can prove the theorem.

\begin{proof}[Proof of the theorem]
Let $U\subset B(\kappa_{\mathscr U})$ be an open set such that $\zero\in \overline{U}\setminus U$. 
To prove the extremal disconnectedness of $B(\kappa_{\mathscr U})$, we must find an $A\in \mathscr 
U$ for which $\langle A\rangle \subset \overline U$. 

Since $\mathscr U$ is $\sigma$-complete, it follows that there exists a positive integer $k$ for 
which $\zero\in \overline {U\cap [\kappa]^k}$ (otherwise, for each $k$, there is an $A_k\in \mathscr 
U$ such that $\langle A_k\rangle \cap U\cap [\kappa]^k$, and for $A=\bigcap A_k$, we 
have $A\in \mathscr U$ and $\langle A\rangle \cap U=\varnothing$, which contradicts the assumption 
$\zero\in \overline{U}$). 

Let $D=\Delta_{\alpha\in 
\kappa}(\kappa\setminus (\gamma_{k}(\alpha)+1)$ (recall that $\gamma_k(\alpha)$ is the order 
type of $([\alpha+1]^{k},\preccurlyeq)$). Then $D\in \mathscr U$, and 
according to the lemma, there exists a $B\in \mathscr U$, $B\subset D$, for 
which $[B]^k\subset U\cap [\kappa]^k$ (otherwise, $U\cap [\kappa]^k\notin \btk k{\mathscr U}$ and 
there exists a $B\in \mathscr U$ such that $[B]^k\cap U\cap [\kappa]^k=\varnothing$ and hence 
$\langle B\rangle \cap U\cap [\kappa]^k$, which contradicts the assumption $\zero\in \overline 
{U\cap [\kappa]^k}$). 

Since $U$ is open, it follows that each $F\in [B]^k$ is contained in $U$ 
together with its neighborhood; in other words, for each $F\in [B]^k$, there exists an $A_F\in 
\mathscr U$ such that $F+\langle A_F\rangle=\{F\triangle G:G\in \langle A_F\rangle \}\subset U$. 
As in the proof of the 
lemma, we number the elements of $[B]^k$ by ordinals in $\preccurlyeq$-increasing order: 
$$ 
[B]^{k}=\{F_\alpha:\alpha<\kappa\},\qquad F_\alpha\prec F_\beta\quad\text{for}\quad \alpha<\beta; 
$$ 
then we set $A_\alpha=A_{F_\alpha}\setminus (\max F_\alpha + 1)$ and $A=\Delta_{\alpha\in \kappa} 
A_\alpha\cap B$. 

Take $\alpha_1, \dots, \alpha_n\in A$, where $n\in \omega$, $n\ge k$, and 
$\alpha_1<\dots< \alpha_n$. We have $\{\alpha_1, \dots, \alpha_k\}=F_\alpha$ for some $\alpha\in 
\kappa$. Since $A\subset B\subset D$, $\beta > \gamma_k(\delta)$ for any $\beta, \delta \in D$ 
such that $\beta >\delta$, and $F_\alpha\in [\alpha_{k}+1]^k$, it follows that 
$\alpha<\alpha_{k+i}$ for $i\ge 1$. Therefore, $\alpha_{k+i}\in A_\alpha \subset A_{F_\alpha}$ 
(because $\alpha_{k+i}\in \Delta_{\beta\in \kappa}A_\beta$). Thus, 
$\{\alpha_1,\dots, \alpha_n\}=F_\alpha \triangle \{\alpha_{k+1},\dots, \alpha_n\}\in F_\alpha+
\langle A_{F_\alpha}\rangle \subset U$. 

Now take $\alpha_1, \dots, \alpha_m\in A$, where $m\in \omega$, $0<m< k$, and 
$\alpha_1<\dots< \alpha_m$, and let $A'$ be any element of $\mathscr U$. We must show that 
$\{\alpha_1, \dots, \alpha_m\}+\langle A'\rangle \cap U\ne \varnothing$. We set $A''=A'\cap 
A\setminus \alpha_m$. For any $\alpha_{m+1},\dots, \alpha_k\in A''$, $\alpha_{m+1}<\dots< \alpha_k$, 
we have $\alpha_1, \dots, \alpha_k\in A$. According to what was shown above, 
$\{\alpha_1, \dots, \alpha_k\}\in U$. On the other hand, $\{\alpha_1, \dots, \alpha_k\}\in 
\{\alpha_1, \dots, \alpha_n\}+\langle A''\rangle \subset \{\alpha_1, \dots, \alpha_n\}+\langle 
A'\rangle $. 

Thus, $\langle A\rangle \subset \overline U$, as required.

It follows from the $\kappa$-completeness of $\mathscr U$ and the definition of the topology of 
$B(\kappa_\mathscr U)$ that the intersection of fewer than $\kappa$ neighborhoods of $\zero$  is 
again a neighborhood of $\zero$. Hence the intersection of fewer that $\kappa$ neighborhoods of any 
point in $B(\kappa_\mathscr U)$ is a neighborhood of this point, and the intersection (union) of 
fewer than $\kappa$ open (closed) sets is open (closed). Thus, any set of cardinality less than 
$\kappa$ in $B(\kappa_\mathscr U)$ is closed (and discrete).
\end{proof}


\begin{thebibliography}{0}

\bibitem{Arhangelskii}
 A. Arhangelski, 
``Groupes topologiques extr\'emalement discontinus,'' 
C. R. Acad. Sci. Paris S\'er. A-B \textbf{265}, A822--A825 (1967).

\bibitem{RS}
E. Reznichenko and O. Sipacheva, 
``Discrete subsets in topological groups
and countable extremally disconnected groups,''
Proc. Amer. Math. Soc. \textbf{149}, 2655--2668 (2021).


\bibitem{Jech}
T. J. Jech, 
\emph{Set Theory: The Third Millennium Edition,
Revised and Expanded} (Springer, Berlin, 2002).

\bibitem{Comfort-Negrepontis}
W. W. Comfort and S. Negrepontis,
\emph{The Theory of Ultrafilters}
(Springer-Verlag, Berlin, 1974).


\end{thebibliography}
\end{document}